\newlength{\defbaselineskip}
\newcommand{\setlinespacing}[1]%
           {\setlength{\baselineskip}{#1 \defbaselineskip}}
\newcommand{\doublespacing}{\setlength{\baselineskip}%
                           {2.0 \defbaselineskip}}
\newtheorem {mythm}{Theorem}  
\newtheorem {mylem}{Lemma}
\newtheorem {mycor}{Corollary}
\newtheorem {ex}{Example}
\newtheorem {mydef}{Definition}
\newcommand{\QED}{\hspace*{\fill}\rule{2.5mm}{2.5mm}}
\newenvironment{proof}{\noindent{\bf Proof.\ }}{\QED\\}
\newcommand{\ds}{\displaystyle}
\newcommand{\C}{\mathbb C}
\def \proclaim#1{\smallskip\noindent{\bf\ignorespaces#1\unskip.}%
 \bgroup\it\space\ignorespaces}
\def \endproclaim{\par\egroup\smallskip}
\begin{document}


\title{\bf More symmetric polynomials related to $p$-norms.}

\date{}
\author{Ivo Kleme\v{s} }
\maketitle
{\centerline
{\it
\noindent Department of Mathematics and Statistics,
\noindent 805 Sherbrooke Street West,} }

{\centerline
{\it
\noindent McGill University,
\noindent Montr\'eal, Qu\'ebec,
\noindent H3A 0B9,
\noindent Canada.
}}
\medskip

{\centerline{  Email:  klemes@math.mcgill.ca }}

\bigskip

\bigskip

\noindent {\it Abstract.} It is known that the elementary
symmetric polynomials $e_k(x)$ have the property that if
$ x, y \in [0,\infty)^n$ and
$e_k(x) \leq e_k(y)$ for all $k$, then $||x||_p \leq ||y||_p$
for all real $0\leq p \leq 1$, and moreover $||x||_p \geq ||y||_p$
for $1\leq p \leq 2$ provided $||x||_1 =||y||_1$. Previously
the author proved this kind of property for $p>2$, for certain polynomials
$F_{k,r}(x)$ which generalize the $e_k(x)$. In this paper we
give two additional generalizations of this type, involving
two other families of polynomials. When $x$ consists of the eigenvalues
of a matrix $A$, we give a formula for the polynomials in
terms of the entries of $A$, generalizing sums of
principal $k \times k$ subdeterminants.


\vfill
\noindent {\it A.M.S. Mathematics Subject Classifications:} 47A30 (05E05, 15A15).
\bigskip


\noindent {\it Key words:} inequality; p-norm; symmetric polynomial;
determinant; matrix function.
\bigskip

\noindent  {\it Date:} 17 February 2013.

\newpage
%
%


\doublespacing

 \centerline{\bf \S 1. Introduction}
\medskip

Let $P_r(s)= 1 + \frac{s^1}{1!} + \dots + \frac{s^r}{r!}$, the $r$th Taylor polynomial
of the exponential function $e^s$. Let
 $F_{k,r}(x)$ denote the coefficient of $t^k$ in the product
\begin{equation}
\label{gen1}
\prod_{i=1}^n \ P_r(x_it) =
\prod_{i=1}^n \left(1 + \frac{(x_it)^1}{1!} + \dots + \frac{(x_it)^r}{r!}\right)
=1+\sum_{k=1}^{nr} F_{k,r}(x)t^k,
\end{equation}
where $x:=(x_1,\dots,x_n)$ for some $n$. For example, when $r=1$ we have
the elementary symmetric polynomials $e_k(x)=F_{k,1}(x)$ in $n$ variables.
In \cite{k2} it was shown that the $F_{k,r}(x)$ can be used to obtain
inequalities for the $p$-norms $||x||_p :=
(\frac{1}{n}\sum_{i=1}^n x_i^p)^{1/p}$ in certain intervals of the real number $p$
in the following sense:

\noindent {\bf Theorem A.}
{\it Let $x,y \in [0,\infty)^n$ and fix an integer $r \geq 1. $
Suppose that
\begin{equation}
\label{F0}
 F_{k,r}(x) \leq  F_{k,r}(y)
 \end{equation}
for all integers  $k$
in the interval $r \leq k \leq nr$. Then
\begin{equation}
\label{p0}
 ||x||_p \leq ||y||_p \ \ \ {\it whenever} \ \ \ 0 \leq p \leq 1.
\end{equation}
If also $ {\displaystyle \sum_{i=1}^n x_i  = \sum_{i=1}^n y_i  }$, then
\begin{equation}
\label{p1}
 ||x||_p \geq ||y||_p \ \ \ {\it whenever} \ \ \ 1 \leq p \leq r+1.
\end{equation}
}\\
%

\noindent
(By continuity in $p$, the $0$-norm is defined to be the geometric mean;
$||x||_0:=(\prod x_i )^{1/n}$.)
The $F_{k,r}(x)$, together with Theorem A, may be viewed as
one possible way to generalize the well-known case
$r=1$ of the theorem \cite[Ch. 4, p. 211-212, Lemma 11.1]{GK},
 which only gives information in the range $p<2$ and uses only
the elementary symmetric polynomials. The
purpose of this note is to give two other generalizations of the case
$r=1$ having the same kinds of
conclusions as Theorem A in the range $p>2$, but using two new families of
symmetric polynomials, different from the above $F_{k,r}(x)$.
One reason for
seeking such results in the range $p>2$ is a certain
open problem on the $p$-norms of the eigenvalues
$\{x_i\}$ of a matrix $A=QQ^*$ where $Q$ is a
$(0,1)$ ``interval matrix". For this motivation we refer the reader to
\cite[Theorem 1.2]{k1} and \cite[Example 1]{k2}. In this connection,
one additional feature of the new polynomials is that they obey
certain identities in terms of the power sum polynomials $p_m(x)
= \sum_i x_i^m \ (m=1,2, \dots)$, similar to the Newton and
``cycle index" identities
for elementary symmetric polynomials. When
the $x_i$ are the eigenvalues of a matrix $A$
these identities lead, via
the Kronecker power $A^{\otimes k}$, to certain expressions
for the new polynomials in terms of the entries of $A$. These
expressions generalize the formula for $e_k(x)$ as the
sum of principal $k \times k$ subdeterminants of $A$
(see (\ref{gA}) to (\ref{Dr}) in \S 6).


 \centerline{\bf \S 2. Backgroud to Theorem A.}
\medskip

We begin with a review of the proof of (\ref{p1})
in the basic case $r=1$ in Theorem A. There is an
integral formula (Mellin transform) for the power $a^p$ of a positive
real number $a$: For any ``suitable" function $\psi(t)$, it is easily seen
that
\begin{equation}
\label{r1}
a^p =
\frac{1}{C_p(\psi)}\int_0^{\infty} \psi(at) t^{-p}\frac{dt}{t},
\quad  {\rm where} \quad
C_p(\psi) = \int_0^{\infty} \psi(t) t^{-p}\frac{dt}{t}.
\end{equation}
\medskip
For $\psi$ to be ``suitable", we mean that the above improper integral $C_p(\psi)$
should converge and be nonzero. For example, with $\psi(t)= t-\log(1 + t) \geq 0$,
the integrals converge for $1<p<2$, and we have $C_p(\psi)>0$.
The restriction $p<2$ is due to the requirement that the integrals (\ref{r1})
converge as $t\to 0^+$; one sees that $t-\log(1 + t)$ decays like $t^2$.
Similarly, as $t\to \infty$, the $t-\log(1 + t)$ grows like $t^1$, so that
the restriction $p>1$ is needed. By applying the formula with $a=x_i$
and $a=y_i$ and summing over $i$, one sees that for
the case $r=1$ of (\ref{p1}) it is sufficient to
assume
$$\sum_i \bigg(x_it -\log(1 + x_it)\bigg)
\geq \sum_i \bigg(y_it -\log(1 + y_it)\bigg)$$
for all $t>0$. For the latter, it is in turn sufficient to have
the hypotheses in Theorem A; that $\sum x_i = \sum y_i$ and
that each coefficient in $\prod_i(1 + x_it)$ increases when $x$ is replaced
by $y$, which is exactly the hypothesis that $F_{k,1}(x) \leq F_{k,1}(y)$
where the $F_{k,1}$ are the elementary symmetric polynomials.

Next, suppose that we want a formula for $a^p$ valid for some
$p >2$. We could attempt to replace $t-\log(1 + t)$ by a function which
decays faster as $t\to 0^+$. For example, the new function
 $\psi(t)=t-\log(1 + t + \frac{t^2}{2!})$ can be seen to decay like $t^3$,
and thus yields a formula for $a^p$ in the range $1<p<3$. Extending this pattern, one sees
that for any positive integer $r$ the function
$\psi(t)=t-\log(1 + t + \frac{t^2}{2!}+\dots + \frac{t^r}{r!})$ is positive and
decays like $t^{r+1}$, and thus gives $a^p$ in the range
$1<p<r+1$. Unravelling the required inequalities,
to obtain the result (\ref{p1}) it is clearly sufficient to
have $\sum x_i = \sum y_i$ and the polynomial inequalities
$F_{k,r}(x)\leq F_{k,r}(y)$, as in Theorem A.

There are other ways to modify the function
$\psi(t)=t-\log(1 + t)$ to make it decay faster than $t^2$ as $t\to 0^+$
 (while preserving some other useful aspects of the above proof).
 We now look at two other such modifications, thereby obtaining,
 after some manipulation,
  two new families of symmetric polynomials which can be
 used instead of the $F_{k,r}(x)$ in an analogous manner.
\medskip

\centerline{\bf \S 3. The Polynomials $G_{k,r}(x)$.}
\medskip
Here the idea will be that instead of modifying $\log(1+t)$ from the ``inside"
as was done above to obtain Theorem A, we now modify it from the ``outside"
and see what is obtained.
Thus, we will subtract the Taylor polynomial of $\log(1+t)$ of some given
degree $r$.
\begin{mylem}
Fix an integer $r\geq 0$ and let $Q_r(t):=t-\frac{1}{2}t^2+\dots +(-1)^{r-1}\frac{1}{r}t^r$, the
$r$th Taylor polynomial of $\log(1 + t)$. (For $r=0$ define $Q_0(t):=0$.)
Define
\begin{equation}
\label{psi1}
\psi_r(t) := (-1)^r\bigg(\log(1 + t)- Q_r(t)\bigg).
\end{equation}
 Then $\psi_r(t) > 0$ when
$t>0$, and $\psi_r(t) = \mathcal{O}(t^{r+1})$
as $t\to 0^+$. Also, for $r \geq 1$ we have
$\psi_r(t) = \mathcal{O}(t^r)$ as $t\to \infty$, and for all
$\epsilon > 0$ we have
$\psi_0(t) = \mathcal{O}(t^\epsilon)$ as $t\to \infty$.
\end{mylem}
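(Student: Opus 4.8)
The plan is to avoid the power series of $\log(1+t)$ directly, since that series only converges for $|t|<1$ whereas all four assertions concern the full range $t>0$ (indeed, three of them are statements about $t\to\infty$). Instead I would differentiate $\psi_r$, because its derivative collapses to a single elementary fraction via a finite geometric series.

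First I would compute $Q_r'(t)$. Since $Q_r(t)=\sum_{j=1}^r (-1)^{j-1}\frac{t^j}{j}$, we get $Q_r'(t)=\sum_{j=1}^r(-1)^{j-1}t^{j-1}=\sum_{i=0}^{r-1}(-t)^i$, a finite geometric series summing to $\frac{1-(-t)^r}{1+t}$. Combined with $\frac{d}{dt}\log(1+t)=\frac{1}{1+t}$, this yields
\begin{equation*}
\frac{d}{dt}\big(\log(1+t)-Q_r(t)\big)=\frac{1}{1+t}-\frac{1-(-t)^r}{1+t}=\frac{(-t)^r}{1+t},
\end{equation*}
so that, after multiplying by $(-1)^r$,
\begin{equation*}
\psi_r'(t)=\frac{t^r}{1+t}.
\end{equation*}

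This single formula gives everything. Since $\psi_r(0)=(-1)^r\big(\log 1 - Q_r(0)\big)=0$ and $\psi_r'(t)=t^r/(1+t)>0$ for $t>0$, the function $\psi_r$ is strictly increasing on $(0,\infty)$ and hence $\psi_r(t)>0$ there, proving positivity. Integrating gives $\psi_r(t)=\int_0^t \frac{s^r}{1+s}\,ds$. For the behavior at $0$, the bound $\frac{s^r}{1+s}\le s^r$ yields $0<\psi_r(t)\le \frac{t^{r+1}}{r+1}$, so $\psi_r(t)=\mathcal{O}(t^{r+1})$ as $t\to 0^+$. For $r\ge 1$ and large $t$, the bound $\frac{s^r}{1+s}\le s^{r-1}$ yields $\psi_r(t)\le \frac{t^r}{r}$, so $\psi_r(t)=\mathcal{O}(t^r)$ as $t\to\infty$. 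Finally, when $r=0$ the integral is exactly $\psi_0(t)=\int_0^t\frac{ds}{1+s}=\log(1+t)$ (consistent with the definition, since $Q_0=0$), and $\log(1+t)=\mathcal{O}(t^\epsilon)$ as $t\to\infty$ for every $\epsilon>0$.

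There is no serious obstacle once one decides to differentiate; the only point requiring care is recognizing that the power series is useless for the large-$t$ asymptotics, which is precisely what forces the derivative route. After that the telescoping cancellation in $\psi_r'$ does all the work, and each remaining estimate is an elementary comparison of the integrand $s^r/(1+s)$ with $s^r$ or $s^{r-1}$.
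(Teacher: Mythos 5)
Your proposal is correct and follows essentially the same route as the paper: both proofs hinge on differentiating $\psi_r$ and collapsing the geometric series to obtain $\psi_r'(t)=t^r/(1+t)$, from which positivity and the small-$t$ estimate follow. The only cosmetic difference is that you integrate this formula explicitly (comparing the integrand with $s^r$ or $s^{r-1}$) where the paper invokes the Mean Value Theorem and, for $t\to\infty$, simply cites the degree of $Q_r$ and the growth of the logarithm.
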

\begin{proof} The proof is a standard exercise in calculus:
Differentiating and then using the formula for the sum of a
geometric series we obtain\\

${\ds \frac{d}{dt}\psi_r(t) =
(-1)^r\bigg(\frac{1}{1+t}- 1+t-t^2+ \dots + (-1)^{r-1}t^{r-1}\bigg)}$\\

${\ds=(-1)^r\frac{(-t)^{r}}{1+t} = \frac{t^{r}}{1+t}.}$\\

\noindent
This shows that $\psi_r'(t) > 0$ for $t>0$, and $\psi_r'(t)=\mathcal{O}(t^{r})$
as $t\to 0^+$.
Since $\psi_r(0) =0$, the Mean Value Theorem implies that
$\psi_r(t) > 0$ when
$t>0$ and that $\psi_r(t) = \mathcal{O}(t^{r+1})$ as $t\to 0^+$.
Finally, the assertions for the case $t\to \infty$ follow from the
fact that the polynomial $Q_r(t)$ is of degree $r$, and from the
growth properties of the logarithm.
\end{proof}

\noindent
It follows that $a^p$ can be represented by (\ref{r1}) using
$\psi=\psi_r$ whenever $p$ is in the interval $r<p<r+1$,
where the corresponding constant $C_p(\psi_r)$ is positive.
Hence, we deduce:
\begin{mylem}
Let $x,y \in [0,\infty)^n$ and fix an integer $r\geq 0$.
If
\begin{equation}
\label{lem2}
\sum_{i=1}^n \psi_r(x_it)
\geq
\sum_{i=1}^n \psi_r(y_it)
 \end{equation}
for all $t>0$, then
$||x||_p \geq ||y||_p$ for all $p$ in the interval $ r \leq p \leq r+1$.
\end{mylem}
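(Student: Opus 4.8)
The plan is to apply the Mellin-type representation (\ref{r1}) directly, since the hypothesis (\ref{lem2}) is already stated as an inequality between the sums $\sum_i \psi_r(x_it)$ and $\sum_i \psi_r(y_it)$. First I would fix a value of $p$ in the \emph{open} interval $r<p<r+1$. By the remark immediately preceding the statement, $\psi=\psi_r$ is ``suitable'' for such $p$: the constant $C_p(\psi_r)$ converges and is strictly positive, and (\ref{r1}) gives $a^p=\frac{1}{C_p(\psi_r)}\int_0^\infty \psi_r(at)\,t^{-p}\frac{dt}{t}$ for every $a\geq 0$. Applying this with $a=x_i$, summing over $i$, and interchanging the finite sum with the integral yields
\begin{equation*}
\sum_{i=1}^n x_i^p = \frac{1}{C_p(\psi_r)}\int_0^\infty \bigg(\sum_{i=1}^n \psi_r(x_it)\bigg) t^{-p}\frac{dt}{t},
\end{equation*}
and the analogous identity for $y$. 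Since $C_p(\psi_r)>0$ and the measure $t^{-p}\,dt/t$ is positive on $(0,\infty)$, the pointwise inequality (\ref{lem2}) integrates to $\sum_i x_i^p \geq \sum_i y_i^p$. Dividing by $n$ and applying the increasing map $u\mapsto u^{1/p}$ (valid because $p>0$) gives $||x||_p \geq ||y||_p$ for every $p$ with $r<p<r+1$.

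Two routine points deserve a line of justification. When a coordinate vanishes, say $x_i=0$, one has $\psi_r(x_it)=\psi_r(0)=0$, so that term contributes nothing and the representation degenerates correctly to $0^p=0$; hence zero coordinates cause no trouble. The interchange of sum and integral is immediate, since the sum is finite and each individual integral $\int_0^\infty \psi_r(x_it)\,t^{-p}\frac{dt}{t}=C_p(\psi_r)\,x_i^p$ converges by Lemma 1; positivity of $\psi_r$ on $(0,\infty)$, also from Lemma 1, is what makes the monotonicity step legitimate.

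What remains, and what I regard as the one step requiring genuine care, is reaching the two endpoints $p=r$ and $p=r+1$, which the integral representation does not cover (the integral defining $C_p(\psi_r)$ diverges there). Here I would invoke continuity of the map $p\mapsto ||x||_p$ on $[0,\infty)$, where at $p=0$ the value is taken to be the geometric mean; letting $p\to r^+$ and $p\to(r+1)^-$ in the inequality already established on the open interval transfers it to the closed interval $r\leq p\leq r+1$. The delicate case within this passage to the limit is $r=0$, where the endpoint $p=0$ must be handled: one checks that $p\mapsto||x||_p$ is continuous up to and including $p=0$ even when $x$ has zero entries, since both the geometric mean and $\lim_{p\to0^+}(\tfrac1n\sum x_i^p)^{1/p}$ equal $0$ in that situation, so the endpoint is still covered.
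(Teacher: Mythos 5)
Your proof is correct and takes essentially the same route as the paper: the paper deduces the lemma directly from the Mellin representation (\ref{r1}) with $\psi=\psi_r$ on the open interval $r<p<r+1$, where Lemma 1 guarantees convergence and positivity of $C_p(\psi_r)$, exactly as you do. Your explicit handling of the endpoints by continuity of $p\mapsto||x||_p$ and of zero coordinates simply fills in details the paper leaves implicit.
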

Assume now the additional hypothesis $\sum_i x_i =\sum_i y_i$.
Fixing the integer $r\geq 0$, our next goal is to find a set of polynomial inequalities
of the form $G_{k,r}(x) \geq G_{k,r}(y)$, or perhaps
the form $G_{k,r}(x) \leq G_{k,r}(y)$,
which would imply (\ref{lem2}). Moreover, let us agree that we want
these polynomials $G_{k,r}(x)$ to have positive coefficients.
Before stating our result for general $r$, let us explain what
it is for the cases $r=0,1,2,3$ in turn.

For $r=0$, we have $\sum_i\psi_0(x_it)
= \sum_i \log(1+x_it)$. But $\log(1+x_it)$ does not have a
Taylor series converging for all values of the variable $t$, so we
cannot use the coefficient of $t^k$ as our choice of polynomial
$G_{k,0}(x)$. A simple remedy is to exponentiate,
obtaining $\prod_i (1+x_it)$, and then use the coefficient of $t^k$
to define the $G_{k,0}(x)$. Then, the hypothesis
$G_{k,0}(x) \geq G_{k,0}(y)$ clearly implies (\ref{lem2}) with $r=0$.
(Here we did not need the assumption $\sum_i x_i =\sum_i y_i$.)

For $r=1$, we have $\sum_i\psi_1(x_it)
= -\sum_i \bigg(\log(1+x_it)-x_it\bigg)$.
In this case, to get an entire function with positive
coefficients, we first negate this, add $(\sum x_i)t$
 and then exponentiate,
obtaining $\prod(1+x_it)$.
We denote the coefficient of $t^k$ by $G_{k,1}(x)$, which happens to
be the same as $G_{k,0}(x)$.
Note that to obtain inequality (\ref{lem2}) for $r=1$, we now need
the hypothesis to be $G_{k,1}(x) \leq G_{k,1}(y)$, because
of the negation performed at the beginning.

For $r=2$, we have $\sum_i\psi_2(x_it)
= \sum_i \bigg(\log(1+x_it)-x_it +\frac{1}{2}x_i^2t^2 \bigg)$.
To end up with positive coefficients after exponentiating,
we again decide to get rid of the negative $\sum-x_it$ terms
by adding on the term $(\sum x_i)t$.
Hence we define $G_{k,2}(x)$ to be the coefficient of $t^k$
in the generating function
$$
\bigg(\prod_{i}(1 + x_it)\bigg)
\exp\bigg(\frac{1}{2}\sum_i x_i^2t^2\bigg),
$$
which we note is entire in $t$, whence its Taylor series
converges to its value for every fixed $x$.
Thus, the conditions $G_{k,2}(x) \geq G_{k,2}(y)$ and
$\sum x_i=\sum y_i$
  imply (\ref{lem2}) with $r=2$.

For $r=3$, we have
$$\sum_i\psi_3(x_it)
= -\sum_i \bigg(\log(1+x_it)-x_it +
\frac{1}{2}x_i^2t^2 -\frac{1}{3}x_i^3t^3 \bigg).$$
As for $r=1$, we first remove the leading $-$ sign.
Then add on the new terms $\big(\sum x_i\big)t
+\frac{1}{3}\big(\sum x_i\big)^3t^3$
to get rid of negative coefficients.
(Note that the polynomial $\big(\sum x_i\big)^3- \sum x_i^3$ has only
positive coefficients.)
Therefore we let $G_{k,3}(x)$ be the coefficient of
$t^k$ in the generating function
$$ \bigg(\prod_{i}(1 + x_it)\bigg)
\exp\bigg(\frac{1}{2}\sum_i x_i^2t^2
+\frac{1}{3}\bigg(\big(\sum_i x_i\big)^3- \sum_i x_i^3\bigg)t^3 \bigg)
\bigg ).
$$
Clearly, the conditions $G_{k,3}(x) \leq G_{k,3}(y)$ and
$\sum x_i=\sum y_i$ imply (\ref{lem2}) with $r=3$.

We continue this pattern for the general case of $r\geq 0$: In the expression
$$(-1)^r\sum_i\psi_r(x_it)=\sum_i \bigg(\log(1+x_it)-Q_r(x_it) \bigg),$$
the terms in $\sum_i -Q_r(x_it)$ with negative coefficients
are precisely those of the form $-\frac{1}{m}\sum_i x_i^mt^m$ for odd $m \leq r$.
So for each such odd $m$
we can add the extra term $ +\frac{1}{m}\bigg(\sum_i x_i\bigg)^mt^m$,
to make all coefficients positive. If $\sum_i x_i =\sum_i y_i$,
then these extra terms are the same with $y_i$ as with $x_i$.
Hence, we can use the exponential
of the resulting  expression as a generating function to define
polynomials $G_{k,r}(x)$ having positive coefficients and
the other desired properties.

We now re-state the latter construction of the $G_{k,r}(x)$ using more formal
notation. For notational efficiency, we observe that the
odd power terms of a polynomial $f(t)$ may be written as $\frac{1}{2}
(f(t)-f(-t))=: f^{-}(t)$.
Thus
$$Q_r^-(t) = \sum_{{\rm odd} \ m \leq r} \frac{1}{m}t^m.$$
Define the generating function $g_r(x,t)$ by
\begin{equation}
\label{gr}
g_r(x,t):=\bigg(\prod_{i=1}^n (1 + x_it)\bigg)
\exp\bigg( - \sum_i Q_r(x_it)+Q_r^-\big(\sum_i x_it\big)\bigg) ,
 \end{equation}
where we recall that $Q_r(s)
:=s-\frac{1}{2}s^2+\dots +(-1)^{r-1}\frac{1}{r}s^r$, the
$r$th Taylor polynomial of $\log(1 + s)$. It is clear from (\ref{gr}) that
$g_r(x,t)$ is an entire function of $t$ for every fixed $x$.

\begin{mydef} For any integers $r\geq 0, k\geq 1$, the
polynomial $G_{k,r}(x)$ is the coefficient of $t^k$ in
the power series expansion of
$g_r(x,t)$.
\end{mydef}
The preceding discussion has shown that the
$G_{k,r}(x)$ are symmetric polynomials with positive coefficients,
and have the following property:
\begin{mythm}
Let $x,y \in [0,\infty)^n$ and fix an integer $r \geq 0. $
Suppose that $\sum_i x_i  = \sum_i y_i$ and that
for all positive integers  $k$,
$$(-1)^r\bigg(G_{k,r}(x)- G_{k,r}(y)\bigg) \geq 0.$$
Then
$||x||_p \geq ||y||_p$ for all real $p$ in the interval $ r \leq p \leq r+1$.
\end{mythm}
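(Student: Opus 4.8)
The plan is to reduce the theorem to the lemma containing inequality (\ref{lem2}), which already yields the desired $p$-norm comparison on $r\leq p\leq r+1$ as soon as one knows that $\sum_i \psi_r(x_it) \geq \sum_i \psi_r(y_it)$ for every $t>0$. Thus the entire task is to extract this pointwise inequality from the coefficientwise hypothesis on the $G_{k,r}$ together with the constraint $\sum_i x_i=\sum_i y_i$.

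First I would take the logarithm of the generating function (\ref{gr}). By the definition (\ref{psi1}) one has $\log(1+s)-Q_r(s)=(-1)^r\psi_r(s)$, so expanding $\log g_r(x,t)$ term by term produces the identity
\[
\log g_r(x,t) = (-1)^r \sum_i \psi_r(x_it) + Q_r^-\Big(t\sum_i x_i\Big).
\]
This is the crux of the argument: it writes $\sum_i \psi_r(x_it)$, up to the sign $(-1)^r$ and a single additive term depending only on $\sum_i x_i$, as $\log g_r(x,t)$. Because of the hypothesis $\sum_i x_i=\sum_i y_i$, the two $Q_r^-$ terms coincide and cancel when we subtract the identity for $y$ from that for $x$, giving
\[
(-1)^r\Big(\sum_i \psi_r(x_it) - \sum_i \psi_r(y_it)\Big) = \log g_r(x,t) - \log g_r(y,t).
\]

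Next I would convert the coefficientwise hypothesis into a functional inequality. Since $g_r(x,t)=1+\sum_{k\geq 1} G_{k,r}(x)t^k$ is entire in $t$, its series converges, and as $t^k>0$ for $t>0$ the assumption $(-1)^r\big(G_{k,r}(x)-G_{k,r}(y)\big)\geq 0$ for every $k$ forces $(-1)^r\big(g_r(x,t)-g_r(y,t)\big)\geq 0$ for all $t>0$. Moreover each factor $1+x_it$ is positive for $t>0$ and the exponential factor is always positive, so $g_r(x,t)$ and $g_r(y,t)$ are strictly positive; monotonicity of $\log$ then transfers the sign to the logarithms, that is $(-1)^r\big(\log g_r(x,t)-\log g_r(y,t)\big)\geq 0$. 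Substituting the displayed identity and multiplying through by $(-1)^r$ gives $\sum_i \psi_r(x_it)\geq \sum_i \psi_r(y_it)$ for every $t>0$, which is exactly the hypothesis of the lemma containing (\ref{lem2}); that lemma then delivers $||x||_p\geq ||y||_p$ for $r\leq p\leq r+1$, completing the proof.

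Every step is elementary, and the one place demanding care is the bookkeeping of the factor $(-1)^r$: one must confirm that the passage through $\log$ preserves the inequality for both parities of $r$, which it does precisely because the sign $(-1)^r$ is carried consistently on both sides of the identity, and one must note that the generating functions are genuinely positive so that taking logarithms is legitimate. I expect no deeper obstacle, since the construction of the $G_{k,r}$ in \S3 was engineered exactly so that this reduction succeeds.
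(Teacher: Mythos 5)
Your proposal is correct and follows essentially the same route as the paper: the paper's own justification (the discussion in \S 3 preceding the theorem) constructs $g_r$ precisely so that $\log g_r(x,t) = (-1)^r\sum_i \psi_r(x_it) + Q_r^-\big(t\sum_i x_i\big)$, with the $Q_r^-$ term cancelling under $\sum_i x_i = \sum_i y_i$, and then passes from the coefficientwise hypothesis to the generating-function inequality and applies Lemma 2. Your write-up is a faithful (and slightly more explicit) formalization of exactly that argument, with the $(-1)^r$ bookkeeping handled correctly for both parities.
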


In a later
section (\S 5) we will express $G_{k,r}(x)$ in terms of the power sums
$p_m=p_m(x) = \sum_{i=1}^n x_i^m$, $m=1,2,\dots$. To that end, we note
the  following alternative expression for the generating function
$g_r(x,t)$ in the sense of formal power series, which
is easily checked by taking logarithms in (\ref{gr}) and
expanding each $\log(1+x_it)$ in powers of $t$:
\begin{equation}
\label{gr2}
g_r(x,t) = \exp\bigg( \sum_{m\geq 1} (-1)^{m-1}\alpha_mt^m/m
\bigg)
\end{equation}
where $(\alpha_m)_{m=1}^\infty$ is the ``modified" sequence of power sums given by:
\begin{equation}
\label{ag}
 \alpha_m =
\begin{cases}
p_1^m& ,\ \ {\rm for} \ m \ {\rm odd,}\  1\leq m\leq r,\\
0& ,\ \ {\rm for} \  m \ {\rm even,} \  1\leq m\leq r,\\
p_m& ,\ \ {\rm for}\ m\geq r+1.
\end{cases}
\end{equation}
 Equivalently, if $r$ is odd, $(\alpha_m) = (p_1, 0, p_1^3, 0, \dots,
0,p_1^r,p_{r+1}, p_{r+2}, \dots)$, and if $r$ is even, $(\alpha_m) =
(p_1, 0, p_1^3, 0, \dots, p_1^{r-1},0,p_{r+1}, p_{r+2}, \dots)$. Note that
the special cases $r=0$ and $r=1$ give the ``full" sequence of power
sums ($\alpha_m = p_m, \forall m \geq 1 $). Then $g_r(x,t)$ is of course the familiar
generating function of the elementary symmetric polynomials $e_k(x)$.

In addition to the fact that $G_{k,r}(x)$
has positive coefficients, it may be interesting to note
that $\pm G_{k,r}(x)$ is {\it Schur convex} in the sense of
majorization theory \cite{MO}. (The $F_{k,r}(x)$ in our introduction
also have such a property \cite{k2}.)
 Specifically, for $r=0$ and
all odd $r\geq 1$,  $-G_{k,r}(x)$ is Schur convex for all $k$
(equivalently, $G_{k,r}(x)$ is {\it Schur concave}), and
for all even $r\geq 2$, $G_{k,r}(x)$ is Schur convex for all $k$.
As discussed above, for both $r=0,1$ the $G_{k,r}(x)$ are just
the elementary symmetric polynomials, which are well-known
examples of Schur concave functions. The standard proof in the
latter case consists in verifying the Schur-Ostrowski criterion,
which is what we will also do below to prove our assertion
for all $r\geq 1$. In fact, we prove the following slightly stronger
statement involving the generating function $g_r(x,t)$.
\begin{mythm}
Fix any integer $r\geq 1$ and indices $i \neq j$. Then
\begin{equation}
\label{sch}
 (-1)^{r}\bigg(\frac{\partial g_r}{\partial x_i} -
\frac{\partial g_r}{\partial x_j}\bigg)=(x_i -x_j)\gamma(x,t)
\end{equation}
for some function $\gamma(x,t)$ (depending on $r,i,j$ ) having positive coefficients
when expanded as a power series in all of its variables
$x_1, \dots, x_n, t$. In particular, the coefficient
of each $t^k$ in $\gamma(x,t)$ is a polynomial in $x_1, \dots, x_n$
having positive coefficients, and thus each $(-1)^{r}G_{k,r}(x)$
is Schur convex.
\end{mythm}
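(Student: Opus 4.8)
The plan is to apply the Schur--Ostrowski criterion at the generating-function level, by computing the logarithmic derivative of $g_r$ and exploiting the fact that the correction terms in (\ref{gr}) are symmetric in $x$. First I would take the logarithm of (\ref{gr}), so that $\log g_r = \sum_\ell \log(1+x_\ell t) - \sum_\ell Q_r(x_\ell t) + Q_r^-(\sum_\ell x_\ell t)$, and differentiate:
\[
\frac{1}{g_r}\frac{\partial g_r}{\partial x_i} = t\left(\frac{1}{1+x_it} - Q_r'(x_it)\right) + t\,(Q_r^-)'\!\left(\sum_\ell x_\ell t\right).
\]
The second summand depends on $x$ only through $\sum_\ell x_\ell = p_1$, hence is independent of the index $i$; this is the crucial structural point, since it means the term cancels when we subtract the analogous expression for $x_j$.

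Next I would simplify the first summand with the finite geometric series: since $Q_r'(s) = \sum_{m=0}^{r-1}(-1)^m s^m$ is the truncation of $\frac{1}{1+s} = \sum_{m\ge 0}(-1)^m s^m$, we get $\frac{1}{1+s} - Q_r'(s) = (-1)^r s^r/(1+s)$, so with $s=x_it$ the first summand equals $(-1)^r x_i^r t^{r+1}/(1+x_it)$. Multiplying through by $g_r$ and subtracting, the symmetric term drops out and
\[
(-1)^r\left(\frac{\partial g_r}{\partial x_i}-\frac{\partial g_r}{\partial x_j}\right) = g_r\, t^{r+1}\left(\frac{x_i^r}{1+x_it}-\frac{x_j^r}{1+x_jt}\right).
\]
Over the common denominator $(1+x_it)(1+x_jt)$ the numerator is $(x_i^r-x_j^r)+x_ix_jt\,(x_i^{r-1}-x_j^{r-1})$, which factors as $(x_i-x_j)$ times $\sum_{a+b=r-1}x_i^ax_j^b + x_ix_jt\sum_{a+b=r-2}x_i^ax_j^b$ (the two sums over nonnegative $a,b$), a polynomial with nonnegative coefficients.

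The essential remaining step is to remove the denominator $(1+x_it)(1+x_jt)$ and to verify that what survives has nonnegative coefficients. Here I would use the product form (\ref{gr}): writing $g_r = \prod_\ell(1+x_\ell t)\cdot E$ with $E := \exp(-\sum_\ell Q_r(x_\ell t)+Q_r^-(\sum_\ell x_\ell t))$, the two offending factors cancel two factors of the product, leaving $\prod_{\ell\ne i,j}(1+x_\ell t)\cdot E$. Thus $\gamma(x,t)$ is the product of $t^{r+1}$, the numerator polynomial above, $\prod_{\ell\ne i,j}(1+x_\ell t)$, and $E$. Every factor except $E$ visibly has nonnegative coefficients, so the whole claim reduces to showing the same for $E$. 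For this I would expand the exponent of $E$ in power sums as in (\ref{gr2})--(\ref{ag}); it equals $\sum_{\text{odd }m\le r}\frac1m(p_1^m-p_m)t^m + \sum_{\text{even }m\le r}\frac1m p_m t^m$. Both $p_m$ and $p_1^m-p_m=(\sum_\ell x_\ell)^m-\sum_\ell x_\ell^m$ have nonnegative coefficients, hence so does the exponent and therefore so does $E$. Reading off the coefficient of $t^k$ and applying the Schur--Ostrowski criterion on $[0,\infty)^n$ then gives the Schur convexity of $(-1)^rG_{k,r}$.

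I expect the main obstacle to be the sign bookkeeping of this last step: checking that, after the cancellation of $(1+x_it)(1+x_jt)$, the surviving exponential $E$ has no negative coefficients is exactly where the design of $g_r$ is used, in particular the added terms $\frac1m(\sum_\ell x_\ell)^m t^m$ for odd $m\le r$, and one must track the parity signs $(-1)^{m-1}$ so that the odd-degree and even-degree contributions both come out nonnegative.
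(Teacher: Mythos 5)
Your proposal is correct and follows essentially the same route as the paper's proof: logarithmic differentiation of $g_r$, cancellation of the symmetric $(Q_r^-)'$ term, the geometric-series identity $\tfrac{1}{1+s}-Q_r'(s)=(-1)^r s^r/(1+s)$, cancellation of $(1+x_it)(1+x_jt)$ against the product in (\ref{gr}), and positivity of the remaining exponential factor. Your explicit power-sum verification that the exponent $\sum_{\mathrm{odd}\ m\le r}\tfrac1m(p_1^m-p_m)t^m+\sum_{\mathrm{even}\ m\le r}\tfrac1m p_m t^m$ has nonnegative coefficients just spells out what the paper cites back to the construction of (\ref{gr}).
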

\begin{proof}
To work out the left-hand side of (\ref{sch}), we may let $i=1$ and $j=2$ by symmetry.
Since $\log g_r = \sum_i \log(1+x_it)
 - \sum_i Q_r(x_it)+Q_r^-\big(\sum_i x_it\big)$, we obtain
$$\frac{1}{g_r}\frac{\partial g_r}{\partial x_1}
=\frac{t}{1+x_1t}  - tQ_r'(x_1t) + t(Q_r^-)'\big(\sum_i x_it\big)$$
$$= t (-1)^r\frac{(x_1t)^{r}}{1+x_1t} + t(Q_r^-)'\big(\sum_i x_it\big)$$
where we have used the geometric series formula
$\frac{1}{1+s}- Q_r'(s)= (-1)^r\frac{s^{r}}{1+s}$,
as seen in the proof of Lemma 1.
Similarly for $x_2$. Hence
$$\frac{1}{g_r}\bigg(\frac{\partial g_r}{\partial x_1} -
\frac{\partial g_r}{\partial x_2}\bigg)
=t^{r+1} (-1)^r\bigg(\frac{x_1^r}{1+x_1t}-\frac{x_2^r}{1+x_2t}\bigg)
$$
$$
=t^{r+1} (-1)^r\frac{(x_1^r-x_2^r)+x_1x_2t(x_1^{r-1}-x_2^{r-1})}{(1+x_1t)(1+x_2t)}.
$$
Multiplying this by $(-1)^rg_r/(x_1-x_2)$, it is now clear that $\gamma(x,t)$
has a power series with positive coefficients in all variables as claimed:
The denominator $(1+x_1t)(1+x_2t)$ will be cancelled by the product $\prod_i(1+x_it)$ in (\ref{gr}),
each of the two terms of type $(x_1^m-x_2^m)/(x_1-x_2)$
simplifies to a sum with positive coefficients, and
the remaining factor $\exp\bigg( - \sum_i Q_r(x_it)+Q_r^-\big(\sum_i x_it\big)\bigg)$
in $g_r$ is also a power series with positive coefficients in all variables (see (\ref{gr})).
\end{proof}
\medskip

\centerline{\bf \S 4. The Polynomials $H_{k,r}(x)$.}
\medskip

In this variant of our topic, we modify the
expression $t - \log(1 + t)$, discussed in \S 2, by simply
replacing $t$ by $t^r$ where $r=2,3, \dots$.

\begin{mylem}
Fix any integer $r\geq 1$ and define
\begin{equation}
\label{phi1}
\phi_r(t) := t^r - \log(1 + t^r).
\end{equation}
 Then $\phi_r(t) > 0$ when
$t>0$, $\phi_r(t) = \mathcal{O}(t^{2r})$
as $t\to 0^+$, and $\phi_r(t) = \mathcal{O}(t^r)$ as $t\to \infty$.
\end{mylem}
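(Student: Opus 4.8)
The plan is to recognize that $\phi_r(t)$ is nothing but the composition of the basic function $\psi(s) := s - \log(1+s)$ from \S 2 with the monomial substitution $s = t^r$; that is, $\phi_r(t) = \psi(t^r)$. All three assertions then reduce to the corresponding elementary facts about $\psi(s)$ that were already recalled in the discussion following (\ref{r1}), so the argument will be almost entirely a matter of transporting those facts through the substitution.

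First I would establish positivity. Since $t > 0$ forces $s = t^r > 0$, and since $\log(1+s) < s$ for every $s > 0$ (equivalently $\psi(s) > 0$ on $(0,\infty)$), it follows at once that $\phi_r(t) = \psi(t^r) > 0$ for all $t > 0$. If one prefers a self-contained argument in the spirit of the proof of Lemma 1, the same conclusion comes from differentiating: $\phi_r'(t) = r t^{r-1}\big(1 - \frac{1}{1+t^r}\big) = \frac{r t^{2r-1}}{1+t^r} > 0$ for $t>0$, and $\phi_r(0)=0$, so the Mean Value Theorem gives $\phi_r(t)>0$.

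Next I would treat the two limiting regimes via the known decay and growth of $\psi$. As $s \to 0^+$, the Taylor expansion $\log(1+s) = s - \frac{1}{2}s^2 + \mathcal{O}(s^3)$ shows $\psi(s) = \mathcal{O}(s^2)$; substituting $s = t^r$ and noting that $t^r \to 0^+$ as $t \to 0^+$ yields $\phi_r(t) = \mathcal{O}((t^r)^2) = \mathcal{O}(t^{2r})$, as claimed. As $s \to \infty$ the logarithm is negligible beside $s$, so $\psi(s) = s - \log(1+s) = \mathcal{O}(s)$; with $s = t^r \to \infty$ this gives $\phi_r(t) = \mathcal{O}(t^r)$.

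There is no genuine obstacle here: exactly as the author describes the proof of Lemma 1, this is a standard calculus verification, and the only real content is the observation $\phi_r = \psi \circ (t \mapsto t^r)$, which carries each of the three properties over without further work. The one small point worth flagging is bookkeeping of the exponents — the inner decay order $s^2$ of $\psi$ becomes $t^{2r}$ after substitution, which is what makes $\phi_r$ suitable for representing $a^p$ in the extended range $r < p < 2r$ rather than merely $1 < p < 2$.
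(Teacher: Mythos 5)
Your proposal is correct and is essentially identical to the paper's own proof, which consists of the single observation that the lemma "follows from the case $r=1$ of Lemma 1 by substituting $t^r$ for $t$" — precisely your identity $\phi_r = \psi \circ (t \mapsto t^r)$ with $\psi(s) = s - \log(1+s) = \psi_1(s)$. Your explicit exponent bookkeeping and the optional direct derivative computation are fine elaborations of the same argument, not a different route.
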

The lemma follows from the case $r=1$ of Lemma 1 by substituting
$t^r$ for $t$.
It follows that $a^p$ can be represented by (\ref{r1}) using
$\psi=\phi_r$ whenever $p$ is in the interval $r<p<2r$,
where the corresponding constant $C_p(\phi_r)$ is again positive.
Hence, we deduce:
\begin{mylem}
Let $x,y \in [0,\infty)^n$ and fix an integer $r\geq 1$.
If
\begin{equation}
\label{lem4}
\sum_{i=1}^n \phi_r(x_it)
\geq
\sum_{i=1}^n \phi_r(y_it)
 \end{equation}
for all $t>0$, then
$||x||_p \geq ||y||_p$ for all $p$ in the interval $ r \leq p \leq 2r$.
\end{mylem}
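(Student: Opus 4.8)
The plan is to transcribe, essentially verbatim, the proof of the case $r=1$ of Theorem A given in \S 2, now feeding the function $\phi_r$ of (\ref{phi1}) into the Mellin-type representation (\ref{r1}). The one ingredient that must be in place is that (\ref{r1}) is valid with $\psi=\phi_r$ throughout the open interval $r<p<2r$. This is exactly what the preceding remark records: by Lemma 3 the integrand $\phi_r(t)\,t^{-p-1}$ behaves like $\mathcal{O}(t^{2r-p-1})$ as $t\to 0^+$ and like $\mathcal{O}(t^{r-p-1})$ as $t\to\infty$, so the improper integral $C_p(\phi_r)$ converges precisely when $p<2r$ (governing the behaviour at $0$) and $p>r$ (governing the behaviour at $\infty$); and since $\phi_r(t)>0$ for $t>0$, the constant $C_p(\phi_r)$ is strictly positive. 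Hence the representation holds with a positive normalizing factor for every $r<p<2r$.

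First I would fix such a $p$ and apply (\ref{r1}) with $a=x_i$ and with $a=y_i$, summing over $i$, to get
$$\sum_{i=1}^n x_i^p=\frac{1}{C_p(\phi_r)}\int_0^\infty\Big(\sum_{i=1}^n\phi_r(x_it)\Big)t^{-p}\frac{dt}{t},$$
together with the analogous identity for $y$. Interchanging the finite sum with the integral is immediate, and the coordinates $x_i=0$ cause no trouble: since $p\geq r\geq 1>0$ we have $x_i^p=0$ and $\phi_r(0\cdot t)=\phi_r(0)=0$, so such terms contribute zero on both sides. Because $C_p(\phi_r)>0$ and $t^{-p-1}>0$ on $(0,\infty)$, the hypothesis (\ref{lem4}) that $\sum_i\phi_r(x_it)\geq\sum_i\phi_r(y_it)$ for all $t>0$ forces $\sum_i x_i^p\geq\sum_i y_i^p$. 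Dividing by $n$ and taking the increasing $p$th root then gives $||x||_p\geq||y||_p$ for every $r<p<2r$.

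Finally I would pass to the closed interval: since each of $||x||_p$ and $||y||_p$ is a continuous function of $p$ on $(0,\infty)$, the inequality $||x||_p\geq||y||_p$ extends from the open interval to its closure $r\leq p\leq 2r$, which is the assertion. I expect no real obstacle, as this is a direct adaptation of the $r=1$ argument; the only genuine work is already isolated in Lemma 3, whose decay estimates $\mathcal{O}(t^{2r})$ and $\mathcal{O}(t^r)$ are precisely what fix the range of admissible $p$. If anything deserves care it is the two endpoints $p=r$ and $p=2r$, where the integral representation itself breaks down (the integral diverges) yet the desired inequality survives by the continuity just invoked.
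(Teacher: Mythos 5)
Your proposal is correct and follows essentially the same route as the paper: the paper deduces this lemma directly from the representation (\ref{r1}) with $\psi=\phi_r$, whose validity and positive constant $C_p(\phi_r)$ for $r<p<2r$ come from the decay estimates in Lemma 3, exactly as you argue, with the closed interval $r\leq p\leq 2r$ recovered by continuity of $p\mapsto||x||_p$. Your explicit treatment of the zero coordinates and of the endpoint passage just fills in details the paper leaves tacit.
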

Next, assume the additional hypothesis $\sum_i x_i =\sum_i y_i$.
Fixing the integer $r\geq 1$, we once again wish to obtain
(\ref{lem4}) as a consequence of some stronger family of symmetric polynomial
inequalities, say of the form $H_{k,r}(x) \leq H_{k,r}(y)$ for
some family $H$ having positive coefficients. By the idea
already seen in \S 3, the following generating function $h_r(x,t)$ seems natural
for this purpose:
\begin{equation}
\label{hr}
 h_r(x,t) :=\bigg(\prod_{i}(1 + x_i^rt)\bigg)
\exp\bigg(\bigg(\big(\sum_i x_i\big)^r- \sum_i x_i^r\bigg)t \bigg).
\end{equation}
Note that here we have in effect replaced the $t^r$ by $t$, for
the sake of simplicity of our generating function. As before,
it is clear that $h_r(x,t)$ is entire in $t$ for every $x$.
\begin{mydef} For any integers $r\geq 1, k\geq 1$, the
polynomial $H_{k,r}(x)$ is the coefficient of $t^k$ in
the power series expansion of
$h_r(x,t)$.
\end{mydef}
It is easily seen from the preceding discussion that the
$H_{k,r}(x)$ are symmetric polynomials with positive coefficients,
and have the following property:
\begin{mythm}
Let $x,y \in [0,\infty)^n$ and fix an integer $r \geq 1$.
Suppose that $\sum_i x_i  = \sum_i y_i$ and that
for all positive integers  $k$,
$$H_{k,r}(x) \leq H_{k,r}(y).$$
Then
$||x||_p \geq ||y||_p$ for all real $p$ in the interval $ r \leq p \leq 2r$.
\end{mythm}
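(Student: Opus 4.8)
The plan is to reduce the statement to Lemma 4 by converting the polynomial inequalities $H_{k,r}(x) \le H_{k,r}(y)$ into the single functional inequality (\ref{lem4}). The bridge between the two is a direct identity relating the generating function $h_r$ of (\ref{hr}) to the sum $\sum_i \phi_r(x_i t)$ of (\ref{phi1}).

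First I would take logarithms in (\ref{hr}) and expand the product, obtaining
$$\log h_r(x,s) = \sum_i \log(1 + x_i^r s) + \bigg(\Big(\sum_i x_i\Big)^r - \sum_i x_i^r\bigg)s.$$
Separately, from the definition (\ref{phi1}) evaluated at $x_i t$ together with the substitution $s = t^r$,
$$\sum_i \phi_r(x_i t) = \Big(\sum_i x_i^r\Big)s - \sum_i \log(1 + x_i^r s).$$
Adding these two displays, the $\sum_i \log(1+x_i^r s)$ terms cancel and the coefficients of $s$ combine to $\big(\sum_i x_i\big)^r$, yielding the key identity
$$\sum_i \phi_r(x_i t) = \Big(\sum_i x_i\Big)^r s - \log h_r(x,s), \qquad s = t^r.$$
This is precisely why the extra exponential factor in (\ref{hr}) was chosen as it was: it absorbs the positive discrepancy $\big(\sum x_i\big)^r - \sum x_i^r$ so that what remains is a clean logarithm of $h_r$ plus the single symmetric term $\big(\sum x_i\big)^r s$.

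Next I would invoke the hypothesis $\sum_i x_i = \sum_i y_i$, which makes the term $\big(\sum_i x_i\big)^r s$ identical for $x$ and $y$. Subtracting the identity for $y$ from that for $x$ therefore gives
$$\sum_i \phi_r(x_i t) - \sum_i \phi_r(y_i t) = \log\frac{h_r(y,s)}{h_r(x,s)}.$$
Since $t\mapsto t^r$ is a bijection of $(0,\infty)$, inequality (\ref{lem4}) holds for all $t>0$ if and only if $h_r(y,s) \ge h_r(x,s)$ for all $s>0$. To establish the latter I would expand $h_r$ as a power series in $s$, noting that its constant term is $H_{0,r}=1$ for both $x$ and $y$ and that each $H_{k,r}$ takes nonnegative values on $[0,\infty)^n$ (it has positive coefficients). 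Applying the hypothesis $H_{k,r}(x)\le H_{k,r}(y)$ termwise, for $s>0$ we get $h_r(y,s) - h_r(x,s) = \sum_{k\ge 1}\big(H_{k,r}(y)-H_{k,r}(x)\big)s^k \ge 0$, while $h_r(x,s)>0$ as a product of positive factors. This yields (\ref{lem4}), and Lemma 4 then delivers $||x||_p \ge ||y||_p$ on $r \le p \le 2r$, completing the proof.

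The content of the argument is essentially concentrated in spotting and verifying the key identity; once it is in hand, everything else is termwise positivity and an appeal to Lemma 4. The only step requiring a little care is the bookkeeping of the substitution $s = t^r$ together with the matching of the added term $\big(\sum x_i\big)^r - \sum x_i^r$ against the $(x_i t)^r = x_i^r t^r$ arising from $\phi_r$ — this is where the specific design of the generating function (\ref{hr}) does its work, and it is the place most prone to a sign or exponent slip.
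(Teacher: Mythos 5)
Your proposal is correct and follows essentially the same route as the paper: the paper's (sketched) justification of this theorem is precisely that $h_r$ was engineered so that, after the substitution $s=t^r$ and under $\sum_i x_i=\sum_i y_i$, the inequality $\sum_i\phi_r(x_it)\geq\sum_i\phi_r(y_it)$ reduces to $h_r(x,s)\leq h_r(y,s)$, which follows termwise from $H_{k,r}(x)\leq H_{k,r}(y)$ and then Lemma 4 concludes. Your write-up simply makes explicit the key identity that the paper leaves implicit in its "preceding discussion."
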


As in the previous section \S 3, for $r\geq 1$
  we may express the generating function (\ref{hr}) in terms of
 the power sums $p_m$ using formal power series:
\begin{equation}
\label{hr2}
h_r(x,t) = \exp\bigg( \sum_{m\geq 1} (-1)^{m-1}\alpha_mt^m/m
\bigg)
\end{equation}
where $(\alpha_m)_{m=1}^\infty$ is a modified sequence of power sums given by:
\begin{equation}
\label{ah}
 \alpha_m =
\begin{cases}
p_1^r& ,\ \ {\rm for} \ m=1,\\
p_{mr}& ,\ \ {\rm for} \  m \geq 2.
\end{cases}
\end{equation}
 Equivalently $(\alpha_m) = (p_1^r, p_{2r}, p_{3r}, p_{4r}, \dots)$.

Also as in \S 3, the polynomials $H_{k,r}(x)$ turn out to have a Schur convexity
property
(Schur concavity, in fact), ``inherited" from their generating function:
\begin{mythm}
Fix any integer $r\geq 1$ and indices $i \neq j$. Then
\begin{equation}
\label{schh}
 \frac{\partial h_r}{\partial x_i} -
\frac{\partial h_r}{\partial x_j}=-(x_i -x_j)\delta(x,t)
\end{equation}
for some function $\delta(x,t)$ (depending on $r,i,j$ ) having positive coefficients
when expanded as a power series in all of its variables
$x_1, \dots, x_n, t$. In particular, the coefficient
of each $t^k$ in $\delta(x,t)$ is a polynomial in $x_1, \dots, x_n$
having positive coefficients, and thus each $H_{k,r}(x)$ is
Schur concave.
\end{mythm}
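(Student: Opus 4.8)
The plan is to mirror the proof of the companion identity (\ref{sch}) for $g_r$: compute the logarithmic derivative of $h_r$, subtract the two variables, and display the result as $-(x_1-x_2)$ times a power series with positive coefficients. By symmetry of $h_r$ it suffices to take $i=1$, $j=2$. From (\ref{hr}),
\[
\log h_r = \sum_i \log(1+x_i^r t) + \Big(\big(\textstyle\sum_i x_i\big)^r - \sum_i x_i^r\Big)t ,
\]
so differentiating in $x_1$ gives
\[
\frac{1}{h_r}\frac{\partial h_r}{\partial x_1}
= \frac{r\, x_1^{r-1}t}{1+x_1^r t} + r\big(\textstyle\sum_i x_i\big)^{r-1}t - r\, x_1^{r-1}t ,
\]
and likewise for $x_2$. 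The first key observation is that the middle term $r(\sum_i x_i)^{r-1}t$ is symmetric in the variables, hence independent of whether we differentiated in $x_1$ or $x_2$; it therefore cancels on subtraction. (This is the role played here by the added term $((\sum_i x_i)^r-\sum_i x_i^r)t$ in the generating function, exactly analogous to the role of $Q_r^-(\sum_i x_i t)$ in \S 3.)

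The second key step is the elementary identity $\dfrac{x^{r-1}}{1+x^r t}-x^{r-1}=\dfrac{-t\,x^{2r-1}}{1+x^r t}$, which I would use to collapse each surviving parenthesised group, reducing the difference to
\[
\frac{1}{h_r}\Big(\frac{\partial h_r}{\partial x_1}-\frac{\partial h_r}{\partial x_2}\Big)
= -\,rt^2\,\frac{(x_1^{2r-1}-x_2^{2r-1})+t\,x_1^r x_2^r(x_1^{r-1}-x_2^{r-1})}{(1+x_1^r t)(1+x_2^r t)} .
\]
Here the point is that both $x_1^{2r-1}-x_2^{2r-1}$ and $x_1^{r-1}-x_2^{r-1}$ are divisible by $x_1-x_2$, each quotient being a sum of monomials with nonnegative coefficients (the second quotient is empty, i.e.\ zero, precisely in the degenerate case $r=1$). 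Thus the numerator equals $(x_1-x_2)\,P(x_1,x_2,t)$ for a polynomial $P$ with positive coefficients.

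To finish, I would multiply through by $h_r$. The denominator $(1+x_1^r t)(1+x_2^r t)$ is cancelled by two of the factors of the product $\prod_i(1+x_i^r t)$ in (\ref{hr}), while the remaining exponential factor has positive coefficients because $(\sum_i x_i)^r-\sum_i x_i^r$ is the multinomial expansion of $(\sum_i x_i)^r$ with the pure $r$th powers deleted, hence itself a polynomial with positive coefficients. This yields (\ref{schh}) with
\[
\delta(x,t)=rt^2\,P(x_1,x_2,t)\Big(\prod_{i\neq 1,2}(1+x_i^r t)\Big)\exp\!\Big(\big((\textstyle\sum_i x_i)^r-\sum_i x_i^r\big)t\Big),
\]
a power series with positive coefficients in $x_1,\dots,x_n,t$. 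Reading off the coefficient of $t^k$ shows that $(x_i-x_j)\big(\partial_{x_i}H_{k,r}-\partial_{x_j}H_{k,r}\big)\le 0$ on $[0,\infty)^n$, which is the Schur--Ostrowski criterion for Schur concavity. I expect no genuine obstacle; the only care needed is the bookkeeping in the factorization by $(x_1-x_2)$ and the verification that the exponential factor contributes positively, both of which are routine.
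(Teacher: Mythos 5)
Your proposal is correct and is essentially the paper's own proof: the same logarithmic differentiation of $h_r$, the same cancellation of the symmetric term $r\big(\sum_i x_i\big)^{r-1}t$, the identical rational expression with numerator $(x_1^{2r-1}-x_2^{2r-1})+x_1^rx_2^rt(x_1^{r-1}-x_2^{r-1})$, and the same factorization by $(x_1-x_2)$ followed by multiplication by $h_r$. You merely make explicit a few details the paper leaves to the reader (the closed form of $\delta(x,t)$, the positivity of $\big(\sum_i x_i\big)^r-\sum_i x_i^r$, and the Schur--Ostrowski step), all of which are handled correctly.
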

\begin{proof} As in the proof of Theorem 2,
we may let $i=1$ and $j=2$ by symmetry.
Since $\log h_r = \sum_i \log(1+x_i^rt)
 - \sum_i x_i^rt+\big(\sum_i x_i\big)^rt$, we obtain
$$\frac{1}{h_r}\frac{\partial h_r}{\partial x_1}
=\frac{rx_1^{r-1}t}{1+x_1^rt}  - rx_1^{r-1}t + r\big(\sum_i x_i\big)^{r-1}t
=\frac{-rx_1^{2r-1}t^2}{1+x_1^rt} + r\big(\sum_i x_i\big)^{r-1}t.
$$
Similarly for $x_2$. Hence
$$\frac{1}{h_r}\bigg(\frac{\partial h_r}{\partial x_1} -
\frac{\partial h_r}{\partial x_2}\bigg)
=-rt^{2}\bigg(\frac{x_1^{2r-1}}{1+x_1^rt}-\frac{x_2^{2r-1}}{1+x_2^rt}\bigg)
$$
$$
=-rt^{2}\frac{(x_1^{2r-1}-x_2^{2r-1})+x_1^rx_2^rt(x_1^{r-1}-x_2^{r-1})}{(1+x_1^rt)(1+x_2^rt)}.
$$
Multiplying this by $h_r/(x_1-x_2)$, it is clear that $\delta(x,t)$
has a power series with positive coefficients in all variables as claimed,
in view of (\ref{hr}).
\end{proof}
\medskip

\centerline{\bf \S 5. Expressions in terms of power sums.}
\medskip
Our next aim is to express $G_{k,r}(x)$ and $H_{k,r}(x)$ in terms of the power sums
$p_m(x) = \sum_{i=1}^n x_i^m$ where $m$ is a positive integer.
This follows immediately from (\ref{gr2}) and (\ref{hr2}) by well-known
formulas for the exponential of a power series and the (signed) cycle index
polynomials $Z_k$, which we now recall for the reader's convenience in the form
of Lemma 5 bellow.
Let $\alpha_1, \alpha_2, \dots $ be any formal commuting variables or ``indeterminates".
If $\lambda = (\lambda_j)= (\lambda_1 \leq \lambda_2 \leq \dots )$
is a partition of $k$ we define $\alpha_\lambda
= \prod_j \alpha_{\lambda_j}$. Define the sign of $\lambda$ by
${\rm sgn}(\lambda) = \prod_j (-1)^{\lambda_j-1}$ ($ =
\beta_{\lambda}$ for the special sequence $\beta_n = (-1)^{n-1}$).
Let $n_m(\lambda) \geq 0$ denote
the number of $j$'s such that $\lambda_j=m$,
and let $\mathcal{P}_k$ denote
the set of all partitions $\lambda$ of $k$.
If $ \sigma $ is an element of $S_k$, the group of
permutations of $\{1,\dots,k\}$, and if $(\lambda_j)=: \lambda(\sigma) $ are the lengths
of the cycles in the disjoint cycle decomposition of $\sigma$, define
$\alpha_\sigma = \alpha_{\lambda(\sigma)}$. Also, define
$n_m(\sigma)=n_m(\lambda(\sigma))$, and
${\rm sgn}(\sigma) =
{\rm sgn}(\lambda(\sigma))$,
the usual sign of a permutation.
\begin{mylem}{\bf [``Exponential Formula"]}
If $\alpha=(\alpha_m)_{m=1}^\infty$ is any sequence of
indeterminates, then the following identity holds in the sense of
formal power series in $t$.
\begin{equation}
\label{lem5} \exp\bigg( \sum_{m\geq 1} (-1)^{m-1}\alpha_mt^m/m \bigg) =
1+ \sum_{k\geq 1} Z_k(\alpha_1,\dots,\alpha_k)t^k,
 \end{equation}
where the polynomials $Z_k$ are given by:
 \begin{equation}
 \label{lem5aa} Z_k(\alpha_1,\dots,\alpha_k)
=\sum_{\lambda \in \mathcal{P}_k} {\rm sgn}(\lambda)
\prod_{m=1}^k \frac{ (\alpha_{m}/m)^{n_m(\lambda)} }{n_m(\lambda)!}
 \end{equation}
 \begin{equation}
\label{lem5a}
=\frac{1}{k!} \sum_{\sigma \in S_k} {\rm sgn}(\sigma) \alpha_\sigma
 \end{equation}
\begin{equation}
\label{lem5b}
= \frac{1}{k!}\left| \begin{array}{ccccccccc}
            \alpha_1 & 1 & 0 & 0 & \cdot & \cdot & 0\\
            \alpha_2 & \alpha_1 & 2 & 0 & \cdot & \cdot & 0\\
            \alpha_3 & \alpha_2 & \alpha_1 & 3 & \cdot & \cdot & 0\\
            \alpha_4 & \alpha_3 & \alpha_2 & \alpha_1 & \cdot & \cdot & 0\\
            \cdot & \cdot & \cdot & \cdot & \cdot & \cdot & 0\\
             \cdot & \cdot & \cdot & \cdot & \cdot & \cdot & (k-1)\\
            \alpha_k & \alpha_{k-1} & \cdot & \cdot & \cdot & \cdot & \alpha_1\\
          \end{array} \right|.
\end{equation}
\end{mylem}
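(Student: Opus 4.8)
The plan is to establish the master identity (\ref{lem5}) with $Z_k$ defined by the partition sum (\ref{lem5aa}), and then to prove the chain of equalities $(\ref{lem5aa})=(\ref{lem5a})=(\ref{lem5b})$. All three facts are classical pieces of the theory of the (signed) cycle index, so the work is mostly careful bookkeeping, and I record below where each ingredient comes from. Only the determinant identity (\ref{lem5b}) requires a genuine computation.

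First I would prove (\ref{lem5}) together with (\ref{lem5aa}) by direct expansion. Writing $c_m := (-1)^{m-1}\alpha_m/m$, the left side of (\ref{lem5}) factors as $\prod_{m\geq 1}\exp(c_mt^m)=\prod_{m\geq 1}\sum_{n_m\geq 0}(c_mt^m)^{n_m}/n_m!$ as formal power series. Collecting the coefficient of $t^k$ amounts to choosing nonnegative integers $n_m$ with $\sum_m m\,n_m=k$, and such a choice is precisely a partition $\lambda\in\mathcal{P}_k$ having $n_m=n_m(\lambda)$ parts equal to $m$. The associated monomial is $\prod_m (c_m)^{n_m}/n_m! = \big(\prod_m (-1)^{(m-1)n_m}\big)\prod_m (\alpha_m/m)^{n_m}/n_m!$, and the sign collapses to $(-1)^{\sum_m (m-1)n_m}=\prod_j(-1)^{\lambda_j-1}={\rm sgn}(\lambda)$. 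This is exactly (\ref{lem5aa}).

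Next, for $(\ref{lem5aa})=(\ref{lem5a})$ I would group the sum over $S_k$ by conjugacy class. Since $\alpha_\sigma$ and ${\rm sgn}(\sigma)$ depend only on the cycle type $\lambda(\sigma)$, and the number of $\sigma\in S_k$ of cycle type $\lambda$ is the standard count $k!/\prod_m m^{n_m(\lambda)}\,n_m(\lambda)!$, dividing by $k!$ reproduces (\ref{lem5aa}) term by term. For the last equality $(\ref{lem5b})$ I would show that both sides obey the same first-order recurrence. Logarithmic differentiation of (\ref{lem5}) gives $kZ_k=\sum_{l=1}^k(-1)^{l-1}\alpha_lZ_{k-l}$ with $Z_0=1$, which determines the sequence uniquely. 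Writing $D_k$ for the $k\times k$ determinant in (\ref{lem5b}) and expanding along its last row $(\alpha_k,\dots,\alpha_1)$, the main step is to evaluate the minor $M_{kj}$ obtained by deleting the last row and column $j$. The key observation is that, because the matrix is lower Hessenberg (only the superdiagonal lies above the main diagonal), deleting column $j$ forces the block in rows $1,\dots,j-1$ and surviving columns $j+1,\dots,k$ to vanish; the minor is therefore block lower-triangular, with top-left block equal to $D_{j-1}$ and bottom-right block triangular with diagonal entries $j,j+1,\dots,k-1$. Hence $M_{kj}=\frac{(k-1)!}{(j-1)!}\,D_{j-1}$, and the cofactor expansion collapses, after reindexing $l=k-j+1$ (which turns $(-1)^{k+j}$ into $(-1)^{l-1}$), to $D_k=(k-1)!\sum_{l=1}^k(-1)^{l-1}\alpha_l\,D_{k-l}/(k-l)!$. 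Setting $Z_k:=D_k/k!$ then gives precisely the recurrence above, matching the generating-function side.

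The main obstacle is the minor evaluation $M_{kj}=\frac{(k-1)!}{(j-1)!}\,D_{j-1}$: it requires noticing the zero block created by removing a column from the Hessenberg pattern, checking the triangularity of the remaining blocks, and tracking the cofactor sign through the reindexing so that the superdiagonal product $j(j+1)\cdots(k-1)$ and the sign $(-1)^{l-1}$ emerge cleanly. Once this is in hand, the recurrence match is immediate and everything else in the lemma is routine.
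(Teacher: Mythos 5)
Your proof is correct, but it takes a genuinely different route from the paper, because the paper does not prove Lemma 5 at all: it treats the result as classical and simply cites Stanley \cite[Theorem 1.3.3]{Stan1} for the exponential formula, \cite[Proposition 5.1.7]{Stan2} for the recursion that is equivalent to the determinant (\ref{lem5b}), and Merris \cite[Eqn. (8.30) and (8.31)]{Merris} for the special case $\alpha_m=p_m$, upgraded to arbitrary indeterminates by the algebraic independence of $p_1,\dots,p_k$. You instead prove everything from scratch, and each step checks out: expanding $\prod_{m\geq 1}\exp\big((-1)^{m-1}\alpha_m t^m/m\big)$ and collecting the coefficient of $t^k$ gives exactly the partition sum (\ref{lem5aa}), with the sign collapsing to $(-1)^{\sum_m (m-1)n_m(\lambda)}={\rm sgn}(\lambda)$; the class-size count $k!/\prod_m m^{n_m(\lambda)}n_m(\lambda)!$ converts this to the permutation average (\ref{lem5a}); and your cofactor evaluation for the Hessenberg determinant is right --- deleting the last row and column $j$ really does leave a block matrix whose top-right block vanishes and whose bottom-right block is triangular with diagonal $j,j+1,\dots,k-1$, so $M_{kj}=\frac{(k-1)!}{(j-1)!}D_{j-1}$, and hence $D_k/k!$ satisfies the same recurrence $kZ_k=\sum_{l=1}^k(-1)^{l-1}\alpha_l Z_{k-l}$ that logarithmic differentiation forces on the coefficients in (\ref{lem5}). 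What your route buys is self-containedness: you reconstruct the content of all three cited sources in about a page, and your recurrence is precisely the ``Newton identities'' the paper alludes to when relating (\ref{lem5b}) to a recursion. What the paper's route buys is brevity, plus a reduction (from general $\alpha$ to the symmetric-function case by algebraic independence) that your direct formal-power-series argument makes unnecessary.
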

A convenient reference for this result and further background is
\cite[Theorem 1.3.3]{Stan1}.
The determinant (\ref{lem5b}) is
equivalent to a recursion (or ``Newton identities")
given in \cite[Proposition 5.1.7]{Stan2}.
(One can also recover (\ref{lem5a}) and (\ref{lem5b}) from
\cite[Eqn. (8.30) and (8.31)]{Merris}, which is
the special case $\alpha_m=p_m , \forall m\geq 1$ and $Z_k = e_k ,
\forall k\geq 1$. This implies
the case of arbitrary $\alpha_m$ by the algebraic independence of
$p_1(x),\dots,p_k(x)$ for $n \geq k$.)
\begin{mycor} Fix an integer $r\geq 0$. Then for all $k\geq 1$,
\begin{equation}
\label{cor1}
 G_{k,r} = Z_k(\alpha_1,\dots,\alpha_k)
 =\frac{1}{k!} \sum_{\sigma \in S_k} {\rm sgn}(\sigma) \alpha_\sigma
 \end{equation}
where $(\alpha_m)_{m=1}^\infty$ is the ``modified" sequence of power
sums given by (\ref{ag}). Let the subset $S(k,r) \subset S_k$
consist of those permutations $\sigma$ having no cycles of even length $\leq r$
(equivalently, $n_m(\sigma) = 0$ for all even $m \leq r$). Then (\ref{cor1})
becomes
\begin{equation}
\label{cor1a}
 G_{k,r}
 =\frac{1}{k!} \sum_{\sigma \in S(k,r)} {\rm sgn}(\sigma)
 p_1^{\sum_{j\leq r} jn_j(\sigma) }
 \prod_{j=r+1}^k p_j^{n_j(\sigma)} \ .
 \end{equation}

\end{mycor}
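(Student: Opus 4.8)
The plan is to obtain both displayed formulas almost immediately by feeding the generating-function identity (\ref{gr2}) into the Exponential Formula (Lemma 5), and then to substitute the explicit modified power sums (\ref{ag}) and collect terms.

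First I would recall that, by the definition of $G_{k,r}$, the polynomial $G_{k,r}(x)$ is the coefficient of $t^k$ in $g_r(x,t)$, while by (\ref{gr2}) we have $g_r(x,t) = \exp\big(\sum_{m\geq 1}(-1)^{m-1}\alpha_m t^m/m\big)$ with the $\alpha_m$ given by (\ref{ag}). This is exactly the left-hand side of (\ref{lem5}), so reading off the coefficient of $t^k$ on the right-hand side gives at once $G_{k,r} = Z_k(\alpha_1,\dots,\alpha_k)$; the second equality in (\ref{cor1}) is then just the alternative expression (\ref{lem5a}) for $Z_k$. This disposes of (\ref{cor1}) with essentially no work beyond quoting Lemma 5.

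For (\ref{cor1a}) I would evaluate $\alpha_\sigma = \prod_j \alpha_{\lambda_j(\sigma)}$ using the three cases of (\ref{ag}), where the $\lambda_j(\sigma)$ are the cycle lengths of $\sigma$. The key observation is that whenever $\sigma$ has at least one cycle of even length $m \leq r$, the corresponding factor $\alpha_m = 0$ annihilates the entire product $\alpha_\sigma$; hence only the permutations in $S(k,r)$ survive, and the sum over $S_k$ may be replaced by the sum over $S(k,r)$. For $\sigma \in S(k,r)$, each cycle of odd length $m \leq r$ contributes $\alpha_m = p_1^m$, so the $n_m(\sigma)$ such cycles contribute $p_1^{m\,n_m(\sigma)}$; since $n_m(\sigma)=0$ for even $m \leq r$ on $S(k,r)$, the combined power of $p_1$ coming from all short cycles is $\sum_{j\leq r} j\,n_j(\sigma)$. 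Each cycle of length $j \geq r+1$ contributes $\alpha_j = p_j$, giving the factor $\prod_{j=r+1}^k p_j^{n_j(\sigma)}$, the product stopping at $k$ because every cycle length of an element of $S_k$ is at most $k$. Multiplying these together yields $\alpha_\sigma = p_1^{\sum_{j\leq r} j\,n_j(\sigma)}\prod_{j=r+1}^k p_j^{n_j(\sigma)}$ for $\sigma\in S(k,r)$, and substituting this into (\ref{cor1}) gives (\ref{cor1a}).

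There is no serious obstacle here; once Lemma 5 is in hand the argument is pure bookkeeping. The one point requiring a little care is the exponent of $p_1$: because an odd cycle of length $m$ contributes $p_1^m$ rather than $p_1$, the correct total exponent is the \emph{weighted} cycle count $\sum_{j\leq r} j\,n_j(\sigma)$ rather than the mere number of short cycles, and one should note that extending this sum to run over all $j \leq r$ (not merely the odd $j$) is harmless precisely because $n_j(\sigma)=0$ for even $j\leq r$ on $S(k,r)$.
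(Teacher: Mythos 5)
Your proposal is correct and follows exactly the paper's intended route: the paper derives (\ref{cor1}) by applying the Exponential Formula (Lemma 5) to the formal power series identity (\ref{gr2}), and then obtains (\ref{cor1a}) by substituting (\ref{ag}), noting that $\alpha_m=0$ for even $m\leq r$ kills all $\sigma \notin S(k,r)$ while each odd short cycle of length $m$ contributes $p_1^m$. Your bookkeeping of the weighted exponent $\sum_{j\leq r} j\,n_j(\sigma)$ matches the paper's formula precisely.
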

\begin{mycor} Fix an integer $r\geq 1$. Then for all $k\geq 1$,
\begin{equation}
\label{cor2}
 H_{k,r} = Z_k(\alpha_1,\dots,\alpha_k)
 =\frac{1}{k!} \sum_{\sigma \in S_k} {\rm sgn}(\sigma) \alpha_\sigma
 \end{equation}
where $(\alpha_m)_{m=1}^\infty$ is the ``modified" sequence of power
sums given by (\ref{ah}). Substituting (\ref{ah}), this becomes
\begin{equation}
\label{cor2a}
 H_{k,r}
 =\frac{1}{k!} \sum_{\sigma \in S_k} {\rm sgn}(\sigma)
 p_1^{rn_1(\sigma) }
 \prod_{j=2}^k p_{jr}^{n_j(\sigma)} \ .
 \end{equation}
\end{mycor}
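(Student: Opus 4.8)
The plan is to combine the generating-function identity for $h_r(x,t)$ with the Exponential Formula of Lemma 5; beyond careful bookkeeping, no new analysis is required. I would start from Definition 2, by which $H_{k,r}(x)$ is precisely the coefficient of $t^k$ in the power series expansion of $h_r(x,t)$. I would then invoke the formal power series identity (\ref{hr2}), rewriting $h_r(x,t)$ as $\exp\big(\sum_{m\geq 1}(-1)^{m-1}\alpha_m t^m/m\big)$ with $(\alpha_m)$ the modified sequence of power sums from (\ref{ah}). This puts $h_r$ into exactly the shape to which Lemma 5 applies.

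The heart of the argument is a direct appeal to the Exponential Formula. Applying (\ref{lem5}) with this $(\alpha_m)$, the coefficient of $t^k$ on the right-hand side is $Z_k(\alpha_1,\dots,\alpha_k)$, which by the permutation-sum form (\ref{lem5a}) equals $\frac{1}{k!}\sum_{\sigma\in S_k}{\rm sgn}(\sigma)\alpha_\sigma$. Matching this coefficient against the definition of $H_{k,r}$ yields (\ref{cor2}) at once. I would remark that only $\alpha_1,\dots,\alpha_k$ can occur, since every cycle of $\sigma\in S_k$ has length at most $k$; thus $Z_k$ depends only on those indeterminates, consistent with the notation.

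To pass from (\ref{cor2}) to the explicit form (\ref{cor2a}), I would use that for a permutation $\sigma$ with cycle-type multiplicities $n_m(\sigma)$ the monomial factors as $\alpha_\sigma=\prod_{m\geq 1}\alpha_m^{n_m(\sigma)}$. Substituting (\ref{ah}), namely $\alpha_1=p_1^r$ and $\alpha_m=p_{mr}$ for $m\geq 2$, gives $\alpha_\sigma=(p_1^r)^{n_1(\sigma)}\prod_{m=2}^k p_{mr}^{n_m(\sigma)}=p_1^{r\,n_1(\sigma)}\prod_{j=2}^k p_{jr}^{n_j(\sigma)}$, which is exactly the summand displayed in (\ref{cor2a}).

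I do not anticipate a genuine obstacle: this is a mechanical specialization of Lemma 5, entirely parallel to Corollary 1. The only points deserving care are the validity of (\ref{hr2}) --- which follows by taking logarithms in (\ref{hr}) and expanding each $\log(1+x_i^rt)$ in powers of $t$, as indicated in \S 4 --- and the exponent bookkeeping in the final substitution, where one must confirm that the factor $p_1^r$ coming from each fixed point ($m=1$) accumulates to $p_1^{r\,n_1(\sigma)}$ across the $n_1(\sigma)$ fixed points of $\sigma$.
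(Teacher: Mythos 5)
Your proposal is correct and follows exactly the paper's own route: the paper derives Corollary 2 immediately from Definition 2, the formal identity (\ref{hr2}), and the Exponential Formula of Lemma 5, with the same substitution of (\ref{ah}) into $\alpha_\sigma=\prod_m\alpha_m^{n_m(\sigma)}$ to obtain (\ref{cor2a}). Nothing is missing; the exponent bookkeeping you flag ($(p_1^r)^{n_1(\sigma)}=p_1^{r\,n_1(\sigma)}$) is the only computation involved, and you carry it out correctly.
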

 The results (\ref{cor1a}) and (\ref{cor2a}) may  also be expressed
 as follows.
 For each $\sigma \in S(k,r)$,
let $\sigma_r$ denote the product of all cycles of length $\geq r+1$
in the disjoint cycle factorization of $\sigma$. By convention, an empty product
is taken to mean the identity of the group $S_k$. (In terms of the mappings,
 $\sigma_r =\sigma$
on the $\sigma$-orbits of length $\geq r+1$, and  $\sigma_r =$ the identity map on
the $\sigma$-orbits of length $\leq r$.)
Note that $ {\rm sgn}(\sigma_r)={\rm sgn}(\sigma)$ for $\sigma \in S(k,r)$.
Then (\ref{cor1a}) may be written as
 \begin{equation}
\label{cor1b}
G_{k,r}
=\frac{1}{k!}\sum_{\sigma \in S(k,r)}{\rm sgn}(\sigma_r)p_{\sigma_r} \ .
\end{equation}
Regarding  (\ref{cor2a}),
we may alternatively regard $H_{k,r}$ as the coefficient of
$t^{(kr)}$ in the generating function
$$h_r(x,t^r) = \exp \bigg( \sum_{j\geq 1} \alpha_j(-1)^{j-1}t^{jr}/j \bigg).$$
Applying the Exponential Formula (Lemma 5) to the latter and doing
the arithmetic leads to
 \begin{equation}
\label{cor2b}
H_{k,r}
=\frac{(-1)^{k(r-1)}}{(kr)!}\sum_{\sigma \in T(k,r)}
{\rm sgn}(\sigma)
r^{L(\sigma)}p_{\sigma_r} \ ,
\end{equation}
where $T(k,r) \subset S_{kr}$ consists of those permutations $\sigma$ of
$\{1,\dots,kr\}$ whose
disjoint cycles all have lengths divisible by $r$, $L(\sigma)$ denotes
the number of disjoint cycles, and  $\sigma_r$ is again the product
of all cycles of length $> r$
in the disjoint cycle factorization of $\sigma$ (i.e. each $r$-cycle
of $\sigma \in T(k,r)$ is replaced by $r$ $1$-cycles).

{\it Remark.} Fix the integer $r \geq 1$. In the ring of symmetric functions
(in infinitely many variables), we may define an algebraic homomorphism $\phi$
by arbitrarily defining the image of each power sum $p_m$, since these constitute a basis.
Hence we may define a homomorphism by
$\phi(p_m) = \alpha_m$ for all $m\geq 1$, where $\alpha_m$ is given by (\ref{ag}).
Then $\phi(e_k) = G_{k,r}$ for all $k \geq 1$, by Corollary 1. (Since
$\phi(e_k)= \phi(Z_k(p_1,\dots,p_k)) =Z_k(\phi(p_1),\dots,\phi(p_k))
=Z_k(\alpha_1,\dots,\alpha_k) = G_{k,r}$.) Similarly, by Corollary 2, the $H_{k,r}$
are the images of the $e_k$ under the homomorphism $\psi$
defined by $p_m \mapsto \alpha_m$
where now $\alpha_m$ means (\ref{ah}). We digress to mention the following
question regarding $\psi$. Let $\mathcal{S}$ denote the set of all linear combinations,
with positive real coefficients, of all Schur functions (see
\cite[\S 4.4]{Sagan}). Is $\psi(\mathcal{S}) \subset \mathcal{S}$ ?
(In particular, is each $H_{k,r}$
a linear combination,
with positive real coefficients, of  Schur functions ?)

\medskip

\centerline{\bf \S 6. Expressions in terms of matrix entries.}
\medskip
Let $A=[a_{ij}]$ be a complex $n \times n$ matrix, and let $(x_1,\dots, x_n) = x$ be
its eigenvalues. In this section we briefly consider the question of how to express
$G_{k,r}(x)$ and $H_{k,r}(x)$ as polynomials in the entries $a_{ij}$ of $A$.
This is possible of course for {\it any} symmetric polynomial $F(x)$, since
by a fundamental result $F(x)$ may first be written  as a polynomial in the
power sums $p_k(x)$ (or, if we prefer, in the elementary symmetric
polynomials $e_k(x)$), and these in turn have well-known
polynomial expressions in terms of the entries $a_{ij}$;
$ p_k(x) = {\rm Trace}(A^k) \ :=: \ <A^k>$ and $e_k(x)=$ sum of all principal $k \times k$
subdeterminants of $A$. We can therefore already give one answer quite
explicitly using the polynomials $Z_k(\alpha)$ from Corollaries 1 or 2 of the previous
section, replacing each occurrence of a $p_m$ by the trace $<A^m>$. For example
we thus obtain:
$$G_{5,3}(x) = Z_5(p_1,0,p_1^3,p_4,p_5) =  Z_5(<A>,0,<A>^3,<A^4>,<A^5>)$$
%

$\displaystyle{=\frac{1}{5!} \sum_{\sigma \in S(5,3)} {\rm sgn}(\sigma)
<A>^{n_1(\sigma)+3n_3(\sigma)}<A^4>^{n_4(\sigma)}<A^5>^{n_5(\sigma)} }
$
\medskip

$\displaystyle{=\frac{7}{40}<A>^5 - \frac{1}{4}<A><A^4> + \frac{1}{5}<A^5> .}
$
\medskip

\noindent
However, we will now present another kind of expression which uses more explicitly
the individual monomials (products) of the entries $a_{ij}$ of $A$.
For example, $G_{k,1}(x) = e_k(x)$ is on the one hand given by
$Z_k(<A>,<A^2>,\dots, <A^k>)$, but on the other hand $e_k(x)$ is also equal to
the sum of all principal $k \times k$ subdeterminants of $A$, which
can immediately be written in terms of products of $a_{ij}$ using the familiar
expansion of a determinant as a sum over permutations.
We thus aim to generalize this latter type of expansion
for any $G_{k,r}$ or $H_{k,r}$, and will do so essentially by
imitating what happens in the special case of $e_k =G_{k,1}$.
We need some of the machinery of representation
theory. Following the sketch given
in \cite[Ch. 7, Appendix 2, pp. 444-445]{Stan2}, or the more detailed
account \cite[Ch. 6]{Merris},
identify $A$ with the linear map $A:V\to V$ of the vector space $V=\C^n$
(of column vectors) as usual, and consider the $k$-fold
tensor product $V^{\otimes k} = V\otimes \dots \otimes V$.
There is an action of $A$ which we denote by $A^{\otimes k}$
(the ``Kronecker power")
from $V^{\otimes k}$ to itself,
characterized by $A^{\otimes k}(v_1 \otimes \dots \otimes v_k) =
(Av_1) \otimes \dots \otimes (Av_k)$
for all elements of the form $v_1 \otimes \dots \otimes v_k$.
Also, there is an action for any $\sigma \in S_k$, characterized by
$\sigma(v_1 \otimes \dots \otimes v_k) = v_{\sigma^{-1}(1)}
\otimes \dots \otimes v_{\sigma^{-1}(k)}$. The two actions
commute; $\sigma A^{\otimes k} = A^{\otimes k}\sigma$ on $V^{\otimes k}$. The trace
of $A^{\otimes k}\sigma$ on $V^{\otimes k}$ depends only on the conjugacy
class of $\sigma$ and eigenvalues $x$ of $A$:
\begin{equation}
\label{tr1}
{\rm Trace} (A^{\otimes k}\sigma) = p_\sigma(x) = \prod_j p_{\lambda_j(\sigma)}(x),
\end{equation}
where the $p_m$ are the power sum polynomials as before.
(This can be seen by using a basis of $V$ which makes $A$
triangular.) By linearity, for any  sequence
of permutations $\sigma_s \in S_k$ and constants $c_s \in \C$
on some finite index set $S$ we have
\begin{equation}
\label{tr2}
{\rm Trace} \bigg(\sum_{s\in S}
c_sA^{\otimes k}\sigma_s \bigg) = \sum_{s \in S}c_s p_{\sigma_s}(x).
\end{equation}
On the other hand, we may also compute a trace
on $V^{\otimes k}$ using the basis $\{u_{m(1)}
\otimes \dots \otimes u_{m(k)} \}$
where $\{u_j\}_{j=1}^n$ is the standard basis of $V=\C^n$,
and $m \in \Gamma_{k,n} =$
set of all functions $m:\{1,\dots,k\} \to \{1,\dots,n\}$.
This will yield the above trace (\ref{tr2}) directly in terms of products of
entries $a_{ij}$ of $A$.
This computation is well-known, but we will now reproduce it here for convenience.
We will essentially follow \cite[Ch. 6 and 7]{Merris}. For a given pair
of sequences $(c_s, \sigma_s)_{s\in S} =:C$ as above, define
a ``matrix function" $d_C$ on
complex $k \times k$ matrices $B=[B_{i,j}]$ by
\begin{equation}
\label{dc}
d_C(B) = \sum_{s\in S}c_s\prod_{j=1}^k B_{\sigma_s(j),j}
\end{equation}
 Now consider a fixed  $m \in \Gamma_{k,n}$.
Recalling that $u_j$ is the $j$th standard basis column vector
in $\C^k$, denote
$$u(m) := u_{m(1)}\otimes u_{m(2)}\otimes \dots \otimes u_{m(k)}.$$
Denote the entries of $A$ by $A(i,j):=a_{i,j}$
for the sake of better legibility.
Let $\langle v, w_j\rangle$ denote the coefficient of $w_j$ in $v$ whenever
$v$ is an element of a vector space of which $\{w_j\}$ is a basis.
Then for each $\sigma \in S_k$ we find that
$$  \bigg< A^{\otimes k}\sigma
 \bigg(u(m)\bigg)\ , \ u(m) \bigg>
=\prod_{j=1}^k A\bigg(m(\sigma(j)), m(j)\bigg)
=:\prod_{j=1}^k A[m|m]_{\sigma(j),j}
$$
 where $A[m|m]$ denotes the
$k \times k$ matrix whose $(i,j)$ entry is $A(m(i), m(j))$.
We remark that $A[m|m]$ is a (principal) submatrix of $A$ only when
the function $m$ is strictly increasing, otherwise it may be thought
of as a ``submatrix" which allows repetition and permutation
of the original indices. Hence, for each fixed $m \in \Gamma_{k,n} $,
$$ \sum_{s\in S}c_s \bigg< A^{\otimes k}\sigma_s
 \bigg(u(m)\bigg)\ , \ u(m) \bigg>
 =d_C(A[m|m]).$$
 Summing over all $m$ gives
 $$
 {\rm Trace} \bigg(\sum_{s\in S}
c_sA^{\otimes k}\sigma_s \bigg)  = \sum_{m\in \Gamma_{k,n} } d_C(A[m|m]).$$
Hence by (\ref{tr2}),
\begin{equation}
\label{tr3}
  \sum_{s\in S}c_s p_{\sigma_s}(x)=\sum_{m\in \Gamma_{k,n} } d_C(A[m|m]) .
\end{equation}
We call $C = (c_s,\sigma_s)_{s\in S}$ a {\it class function}
on $S_k$ if the index set $S=S_k$, $\sigma_s = s$,
and $c_s$ depends only on the conjugacy class of $s$.
When $C$ is a class function,
the latter sum may be simplified further by noting that
$d_C(A[m|m])=d_C(A[m'|m'])$
whenever $m$ and $m'$ are permutations of each other, i.e. when
$m' = m\circ \tau$ for some $\tau \in S_k$. Let $(N_1,\dots, N_n) =:N$
denote an $n$-tuple of integers $N_i \geq 0$ with $\sum_i N_i = k$,
and let $m^{(N)} \in \Gamma_{k,n}$ denote the unique nondecreasing
function which takes on each value $i$ exactly $N_i$ times. Then
there are exactly $k!/(N_1! \dots N_n!)$ distinct permutations
of $m^{(N)}$ in $\Gamma_{k,n}$. Hence, whenever a matrix function
$d_\gamma$ can be shown to be of the form
$d_\gamma = d_C$ for some class function $C$, then we have
$$ \sum_{m\in \Gamma_{k,n} } d_\gamma(A[m|m])
=\sum_{\sum_i N_i = k }
d_\gamma(A[m^{(N)}|m^{(N)}])\frac{k!}{N_1! \dots N_n!}.$$

For our cases of interest, it remains
to apply (\ref{tr3}) to the results (\ref{cor1b})
or (\ref{cor2b}).
In the first case, one obtains
 \begin{equation}
\label{gA}
G_{k,r}(x) = \frac{1}{k!}\sum_{m\in \Gamma_{k,n} } \delta_r(A[m|m])
=\sum_{\sum_i N_i = k } \delta_r(A[m^{(N)}|m^{(N)}])\frac{1}{N_1! \dots N_n!},
\end{equation}
where $\delta_r(B)$ is a ``modified determinant" defined
 for any $k \times k$ matrix $B$ by
 \begin{equation}
\label{dr}
\delta_r(B) =\sum_{\sigma \in S(k,r)}
{\rm sgn}(\sigma_r)\prod_{j=1}^k B_{\sigma_r(j),j} \ ,
\end{equation}
which is easily seen to be of the form $d_C(B)$ for some class function $C$
on $S_k$.
 Similarly, from (\ref{cor2b}) one obtains
\begin{equation}
\label{hA}
H_{k,r}(x) = \frac{1}{(kr)!}\sum_{m\in \Gamma_{kr,n} } D_r(A[m|m])
=\sum_{\sum_i N_i = kr } D_r(A[m^{(N)}|m^{(N)}])\frac{1}{N_1! \dots N_n!},
\end{equation}
where $D_r(B)$ is defined for any $kr \times kr$ matrix $B$ as:
 \begin{equation}
\label{Dr}
D_r(B) =(-1)^{k(r-1)}\sum_{\sigma \in T(k,r)}
{\rm sgn}(\sigma)r^{L(\sigma)}\prod_{j=1}^{kr} B_{\sigma_r(j),j} \ ,
\end{equation}
which can be checked to be $d_C(B)$ for some class function $C$
on $S_{kr}$.



\end{document}